\newcommand{\classoption}{cpc}
\newcommand{\coloroption}{color}
\def\ev#1{\langle#1\rangle}
  \numberwithin{equation}{section}
  \theoremstyle{definition}
  \newtheorem{theorem}{Theorem}
  \theoremstyle{remark}
  \newtheorem{lemma}{Lemma}
\begin{document}

\ifthenelse{\equal{\classoption}{article}}
{
  \title{A computation of the expected number of posts \\ in a finite random
    graph order}

  \author{Luca Bombelli$^{1,}$\footnote{bombelli@olemiss.edu}, Itai
    Seggev$^{2,}$\footnote{iseggev@knox.edu},      and 
    Sam Watson$^{1,}$\footnote{sswatson@olemiss.edu}\\  \\
    $^1$Department of Physics and Astronomy, University of Mississippi,\\
    108 Lewis Hall, University, MS 38677, U.S.A.\\ \\
    $^2$Department of Mathematics, Knox College\\
    Box 80, 2 East South Street, Galesburg, IL 61401, U.S.A.}
  \date{Draft, August 7, 2008}
}
{
  \title[Posts in a Finite Random Graph Order]{A Computation of the Expected
    Number of Posts in a Finite Random Graph Order}
  \author[L. Bombelli, I. Seggev, and S. S. Watson]{
    \spreadout{LUCA BOMBELLI}$^1$, \spreadout{ITAI SEGGEV}$^2$, and
        \spreadout{SAM WATSON}$^1$\\
    \affilskip{$^1$}Department of Physics and Astronomy, University of
          Mississippi,\\ 
    \affilskip 108 Lewis Hall, University, MS 38677, U.S.A.\\
    (email: \texttt{bombelli@olemiss.edu}, \texttt{sswatson@olemiss.edu})\\
    \affilskip{$^2$}Department of Mathematics, Knox College\\
    \affilskip Box 80, 2 East South Street, Galesburg, IL 61401, U.S.A.\\
    (email: \texttt{iseggev@knox.edu}) 
  }
  \date{August 7, 2008}
}

\maketitle

\begin{abstract}
\noindent 
A random graph order is a partial order achieved by independently
sprinkling relations on a vertex set (each with probability $p$) and adding
relations to satisfy the requirement of transitivity.  A \textit{post} is
an element in a partially ordered set which is related to every other
element. Alon et al.\ \cite{Alon} proved a result for the average number of
posts among the elements $\{1,2,\ldots,n\}$ in a random graph order on
$\mathbb{Z}$.  We refine this result by providing an expression for the
average number of posts in a random graph order on $\{1,2,\ldots,n\}$,
thereby quantifying the edge effects associated with the elements
$\mathbb{Z}\backslash\{1,2,\ldots,n\}$.  Specifically, we prove that the
expected number of posts in a random graph order of size $n$ is
asymptotically linear in $n$ with a positive $y$-intercept.  The error
associated with this approximation decreases monotonically and rapidly in
$n$, permitting accurate computation of the expected number of posts for
any $n$ and $p$. We also prove, as a lemma, a bound on the difference
between the Euler function and its partial products that may be of interest
in its own right.
\end{abstract}

\section{Introduction}
Several definitions of random partial orders can be found in the
combinatorics literature. If the number $n$ of elements of the underlying
set is fixed, perhaps the most natural definition is that of ``uniform
random order," in which we pick a member of the set of $n$-element posets
uniformly at random.  Although no practical way is known of generating
posets according to this definition for large $n$, it is known that as $n
\to \infty$ most of them are ``3-level" posets \cite{KR}. In this case,
increasing $n$ leads to posets with a greater width but not a greater
height, on average, because of the growth in the number of relations per
element. A second definition is that of ``random $k$-dimensional order",
for some integer $k$, in which one picks $k$ linear orders on $n$ elements
uniformly at random (in other words, $k$ randomly chosen permutations of
the set $\{1, 2, ..., n\}$), and then takes their intersection. The
resulting posets are of dimension $k$ by construction, and some of their
properties are known \cite{Wink}.

A third definition, and the one we will mainly be interested in here, is
that of ``random graph order'' which depends on a parameter $p \in (0,1)$.
To obtain a partial order of this type, one first generates a random graph
on the vertex set $\{1, 2, ..., n\}$ by including an edge $(i,j)$ with
probability $p$ for each pair of vertices $i$ and $j$; one then turns the
graph into a directed one by converting each edge $(i,j)$ into a relation
$i \prec j$ in the partial order if $i < j$ (in the usual order on the
integers); and, finally, one imposes transitive closure by adding relations
so that $i \prec k$ whenever there exists a $j$ such that $i \prec j$ and
$j\prec k$. Several properties of random graph orders, such as width,
height, and dimension have been studied \cite{AF}. In particular, it is
known \cite{AF} that the expected height of a random graph order on $n$ elements
grows linearly with $n$.

In the physics literature, random graph orders are also known as
``transitive percolation" because they arise in a special case of the
theory of directed percolation \cite{NS}, where non-local bonds in a
1-dimensional lattice are turned on with probability $p$. They also play a
prominent role among the stochastic sequential growth models that have been
proposed for the classical version of the dynamics of causal sets
\cite{RiSo}, and this is the context that most directly motivates our work.
A causal set \cite{BLMS} is a partially ordered set that is locally finite,
meaning that the interval or Alexandrov set $I(i,j):= \{l\mid i\prec l\prec
j\}$ is finite for every pair with $i\prec j$. In the
causal set approach to quantum gravity (for a recent review, see Ref.\ 
\cite{Joe}), the poset is seen as a discrete spacetime.  The partial 
order corresponds to the causal relations among its elements, and ``$i \prec
j$" can be read as ``$i$ causally precedes $j$", while volumes of spacetime
regions correspond to the cardinality of the appropriate subsets of the
poset.  The final theory is expected to assign a spacetime volume of the
order of $\ell_{\rm P}^4$ to each element, where $\ell_{\rm P}:=
\sqrt{G\hbar/c^3} = 1.6\times10^{-33}$ cm is the Planck length.

A {\em post\/} in a poset is an element that is related to every other
element in the poset. In other words, each post $n$ divides the ordered set
into the subset of elements that precede it, its ``past", and the subset of
elements that follow it, its ``future".  In the causal set interpretation
the spacetime ``pinches off" at $n$; this can be seen as the
zero-spatial-volume singularity at the end of a collapsing phase for the
universe and the beginning of a new expanding phase. Thus, a first set of
desirable conditions for transitive percolation to be considered as a
physically reasonable way of generating discrete spacetimes is that if a
random graph order develops multiple posts, the number of elements between
two posts be allowed to grow sufficiently large for that region to be able
to model our observed universe.

It has been known for some time \cite{Alon} that infinite random graph
orders have an infinite number of posts.  However, the occurrence of posts
in finite random graph orders has not been studied as extensively. We begin
by revisiting random graph orders on $\mathbb{N}$ and then analyzing finite
posets.  Roughly speaking, we find in our analysis that ``edge effects''
are small but non-negligible.  In the infinite case, the probability that
element $n$ is a post approaches a constant value fairly rapidly.  In a
finite case, the expected number posts is well approximated by this
limiting probability times the size of the set plus a small positive
offset.  We illustrate our conclusions with numerical simulations.

\section{Infinite Random Graph Orders}

We begin by calculating the probability that any given element in an
infinite random graph order is a post.  In order to express this
probability succinctly we define $q=1-p$,
\[\lambda_k(q)=(1-q)(1-q^2)\cdots(1-q^{k}), \quad {\rm and  } \quad
\kappa(q)=\lambda_\infty(q)=\lim_{k\rightarrow\infty}\lambda_k(q).\] The
function $\kappa(q)$ is known as the Euler function, and it has been
studied in considerable detail \cite{sokal}. In particular, $\kappa(q)>0$
for all $0<q<1$.  We now prove a theorem for one-way infinite random graph
orders similar to the result of Alon, et al.\ \cite{Alon} that the
probability that an element in a random graph order on $\mathbb{Z}$ is a
post is $\kappa^2(q)$.

\begin{theorem}\label{infty_thm} 
  In a random graph order on $\mathbb{N}$ with probability $0<p<1$,
  the probability that any given element $k$ is a post is given by
\begin{equation}
\begin{array}{rcl}
\Pr(k\mbox{ is a post})
&=& \lambda_{k-1}(q)\,\lambda_\infty(q)\\
&\gtrsim& \kappa^2(q) \label{postprob},
\end{array}
\end{equation}
where by $\gtrsim$, we mean ``greater than and, for large $k$, approximately
equal to.''
\end{theorem}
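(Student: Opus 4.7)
The plan is to exploit the fact that the event ``$k$ is a post'' splits cleanly into two independent sub-events: every element below $k$ lies in its past, and every element above $k$ lies in its future.

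First, I would observe that whether $j \prec k$ (for $j < k$) depends only on the edges of the underlying random graph among $\{1,2,\ldots,k\}$, whereas whether $k \prec j$ (for $j > k$) depends only on the edges among $\{k,k+1,\ldots\}$. These two edge sets are disjoint, so by independence of the edge indicators,
\[\Pr(k\text{ is a post}) = \Pr(\text{past complete})\cdot\Pr(\text{future complete}).\]

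Next, I would compute $\Pr(\text{future complete})$ by a step-by-step conditioning argument. Writing $X_j$ for the indicator that $k \prec j$ (for $j > k$), I claim that conditional on $X_{k+1}=\cdots=X_{k+m}=1$, the probability that $X_{k+m+1}=1$ equals $1-q^{m+1}$. Indeed, under the conditioning $k+m+1$ lies in the future of $k$ if and only if at least one of the $m+1$ edges from $\{k,k+1,\ldots,k+m\}$ to $k+m+1$ is present, because every increasing path from $k$ to $k+m+1$ must take its final hop from some vertex in that set, and each of those vertices is already known to be in the future of $k$. Moreover, those $m+1$ potential edges are independent of the edges among $\{k,\ldots,k+m\}$ on which the conditioning depends, so the conditional probability is exactly $1-q^{m+1}$. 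Telescoping yields $\Pr(\text{future complete})=\prod_{j=1}^{\infty}(1-q^j)=\lambda_\infty(q)$. An entirely analogous argument, working downward from $k-1$ to $1$ on the finite past side, gives $\Pr(\text{past complete}) = \prod_{j=1}^{k-1}(1-q^j) = \lambda_{k-1}(q)$.

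Multiplying these probabilities establishes the first line of~(\ref{postprob}). The second line is then immediate: each factor $(1-q^j)$ is strictly less than $1$, so $\lambda_{k-1}(q) > \lambda_\infty(q) = \kappa(q)$, which forces the product to exceed $\kappa^2(q)$; and $\lambda_{k-1}(q)\to\kappa(q)$ as $k\to\infty$, so the excess vanishes in the limit. The only real subtlety is the conditioning identity for the future probability --- verifying cleanly that the event $\{X_{k+m+1}=1\}$ restricted to the conditioning event is exactly the union of the edge events $\{(i,k+m+1) \text{ present}\}$ for $i\in\{k,\ldots,k+m\}$ --- but this follows from the elementary observation that any increasing path from $k$ to $k+m+1$ visits only vertices in the interval. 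Once this step is in place, the rest is bookkeeping.
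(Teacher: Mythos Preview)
Your proposal is correct and follows essentially the same route as the paper: split the event into ``past complete'' and ``future complete,'' argue independence via disjoint edge sets, and compute each factor by the chain rule using the conditional identity $\Pr(\text{next element related} \mid \text{all closer ones related}) = 1 - q^{m+1}$. Your justification of that conditional identity (the last hop of any increasing path, and independence of the relevant edges from the conditioning) is if anything spelled out a bit more carefully than in the paper, but the argument is the same.
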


\begin{figure}
\begin{center}
\ifthenelse{\equal{\coloroption}{color}}
{\includegraphics[angle=0,width=\textwidth]{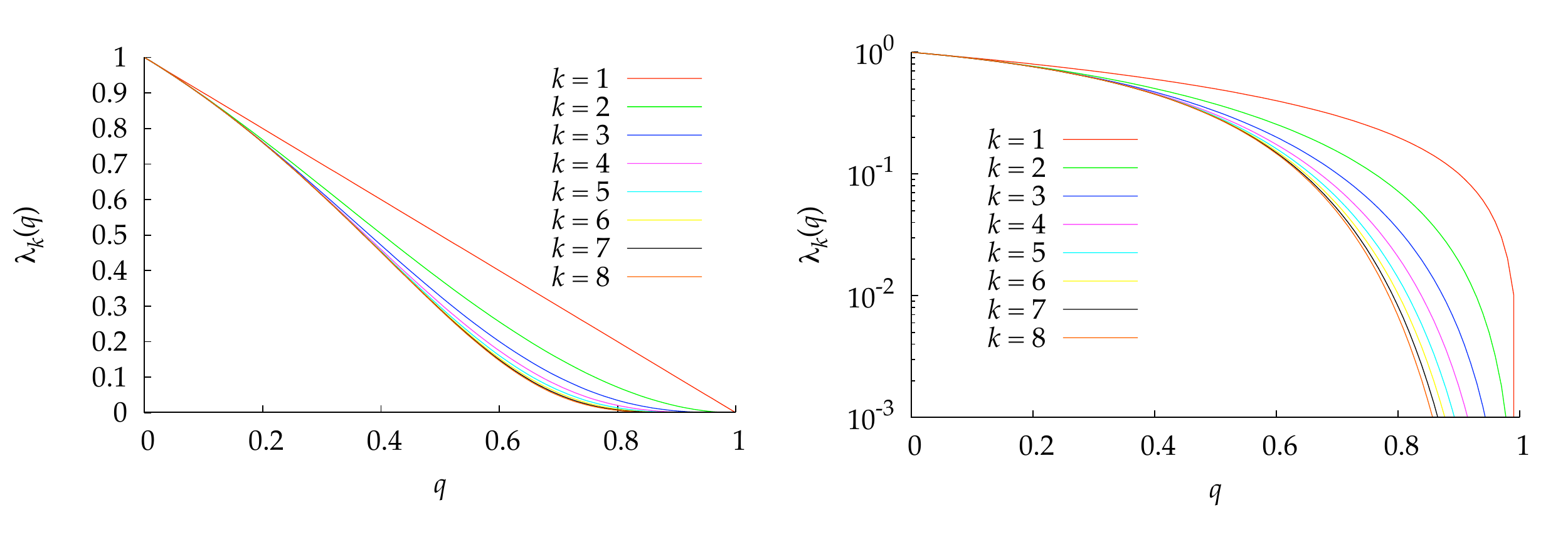}}
{\includegraphics[angle=0,width=\textwidth]{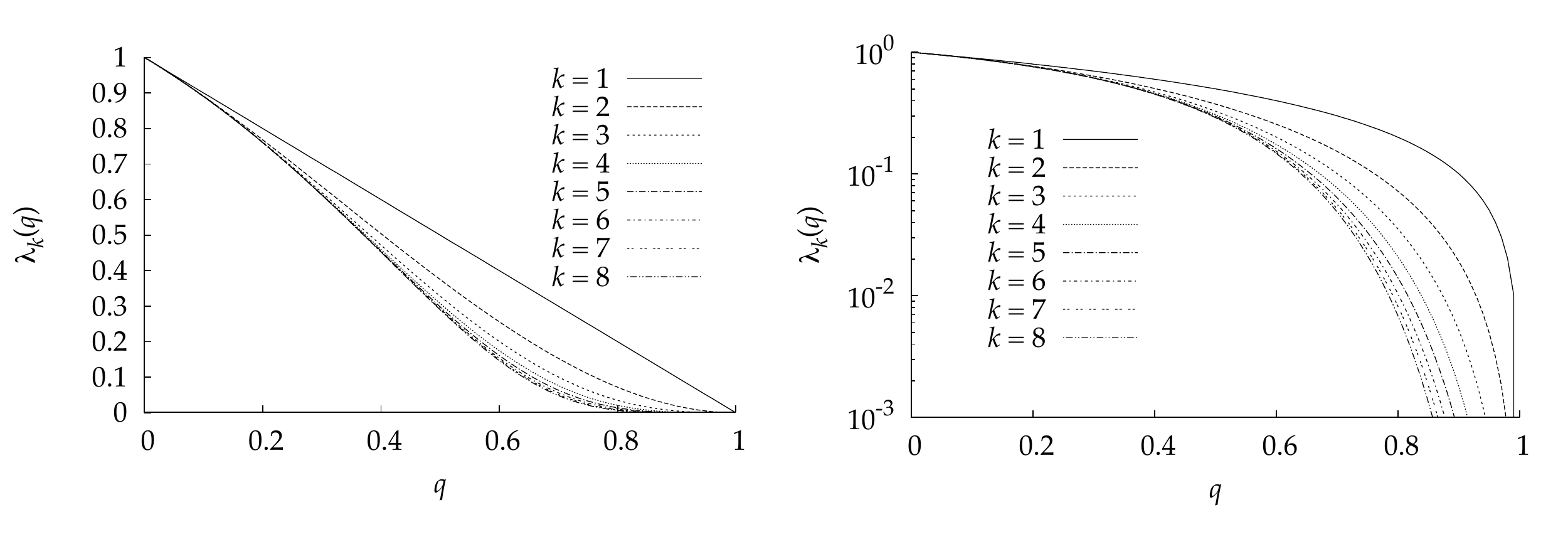}}
\caption{\label{f1} 
  Plot of $\lambda_k(q)$ as a function of $q$ for $k = 1,...,8$ 
  (from top to bottom).
}
\end{center}
\end{figure}

Plots of $\lambda_k(q)$ as functions of $q$ for various values of $k$ (see
Fig.\ \ref{f1}) illustrate the rate of the convergence of $\lambda_k(q)$ to
$\kappa(q)$.  Because of this convergence, the similarity in
(\ref{postprob}) holds for ``most'' $k\in\mathbb{N}$.  This observation
suggests two intuitively plausible results.  The first is that with unit
probability, there are infinitely many posts in any random graph order.
This result was first proved by Alon et al.\ \cite{Alon}.  Although the
original result was for random graph orders on $\mathbb{Z}$, the proof is
easily adapted to partial orders on $\mathbb{N}$.  The second result is
that the mean spacing between posts is $\kappa^{-2}(q)$.  Equivalently, the
expected number of posts after $N$ stages is well-approximated by $\kappa^2 N$.
This information on the structure of random 
graph orders is of the type we are interested in from the point of view of
their possible applications as models of discrete spacetimes, and in the
next section we will consider it in more detail.

\begin{proof}[Proof of theorem.]
  If $k$ is related to $k-1$, $k-2$, $\ldots$, $k-i+1$ (for $1<i<k$),
  then the probability that $k-i \nprec k$ is $q^i$, since by transitivity
  the only way for this to happen is for $k-i$ to be unrelated to each of
  $k-i+1$, $k-i+2$, ..., $k$.  Hence we find that
\[
\Pr(k-i \prec k \mid k-1 \prec k \;\wedge\; k-2 \prec k \;\wedge \ldots
\wedge\; k-i+1 \prec k) = 1-q^i,
\] 
where we have used the notation $\Pr(A|B)$ for the conditional probability
of $A$ given $B$ and $\wedge$ for logical and.  Repeatedly decomposing the
probability that $k$ is related to each element before it yields the
expression
\begin{align}
\nonumber 
   \Pr(k\text{ related to all previous }m )&=\Pr(k-1\prec k)\cdot\Pr(k-2\prec k
   \mid k-1 \prec k) \cdot \ldots \\ 
\nonumber &\hspace{1.6cm}  \cdot \Pr(1\prec k \mid 2 \prec k \; \wedge \ldots
   \wedge\; k-1\prec k) \\ 
\nonumber &= (1-q)(1-q^2)\cdots(1-q^{k-1}) \\
\nonumber &= \lambda_{k-1}(q) \\
&> \kappa(q). \label{lambda} 
\end{align}
The inequality follows because the partial products of $\lambda_\infty(q)$
are strictly decreasing.

On the other hand, the same logic that led to (\ref{lambda}) shows the
probability that $n$ is related to every later element is given by
\[ 
\begin{array}{rcl}
\Pr(n \text{ related to every later } m) &=& \Pr(n \prec n+1) \cdot 
\Pr (n \prec n +2 \mid n \prec n+1) \cdot \ldots\\
&=&  \prod_{j=1}^{\infty} (1-q^j)\\
&=& \kappa(q). 
\end{array}
\]
Moreover, the events $m \prec n$ for $m < n$ and $n \prec m$ for $m > n$
are independent.  If transitive closure were to relate $m < n$ and $k > n$
in a manner involving $n$, then $n$ would be the middle element and both
relations $m \prec n$ and $n \prec k$ would exist \textit{a priori}.  Hence
the event ``$n$ is a post'' will occur if and only if the two preceding,
independent events occur, which has probability
\begin{align*}
\Pr(n\mbox{ is a post})
&= \Pr(n\mbox{ related to all }m<n)\cdot \Pr(n\mbox{ related to all }m>n) \\
&= \lambda_{n-1}(q)\,\lambda_\infty(q) \\
&\gtrsim \kappa^2(q),
\end{align*}
as desired.
\end{proof}

\section{Posts in Finite Random Graph Orders}
From the results for infinite graph orders we expect that, to a good
approximation, the expected number of posts in an $n$-element poset is
$N_{\rm posts}(n) = \kappa^{2}(q)\,n$. In fact, the mean number of posts
among the elements $\{1,2,\ldots,n\}$ in a random graph order on
$\mathbb{Z}$ is equal to $\kappa^{2}(q)\,n$ \cite{BB}.  However, this number
should be smaller than the expected number of posts in a random graph order
on $\{1,2,\ldots,n\}$---and appreciably smaller for small $n$---because elements
near the edge are significantly likely to be related to all the elements in
$\{1,2,\ldots,n\}$ but not all the elements in $\mathbb{Z}$.  We have carried
out numerical simulations of transitive percolation with different values
of $p$ and $n$; Fig.\ \ref{f2} shows the resulting values of $N_{\rm
  posts}$ plotted versus $n$ for a fixed $p$. From this plot, one may see
that for large $n$ the number of posts is well approximated by a line with
a small offset. We will now prove the following theorem.

\begin{figure}
\begin{center}
\ifthenelse{\equal{\coloroption}{color}}
{\includegraphics[angle=0,width=\textwidth]{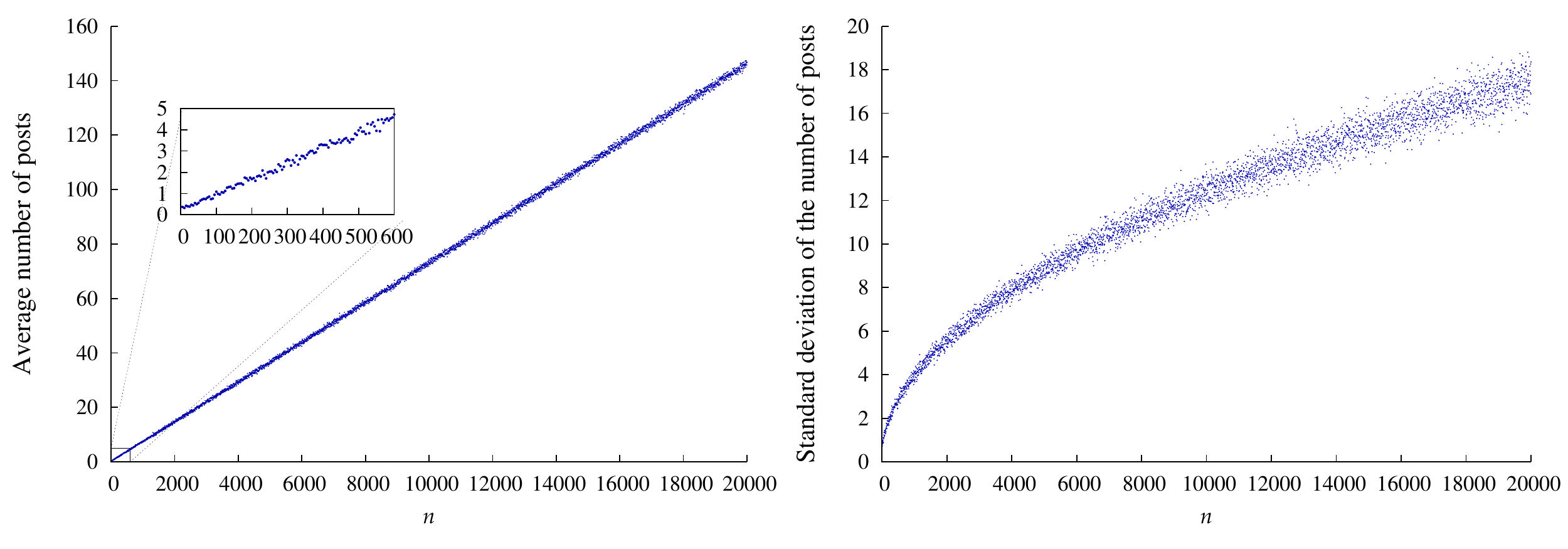}}
{\includegraphics[angle=0,width=\textwidth]{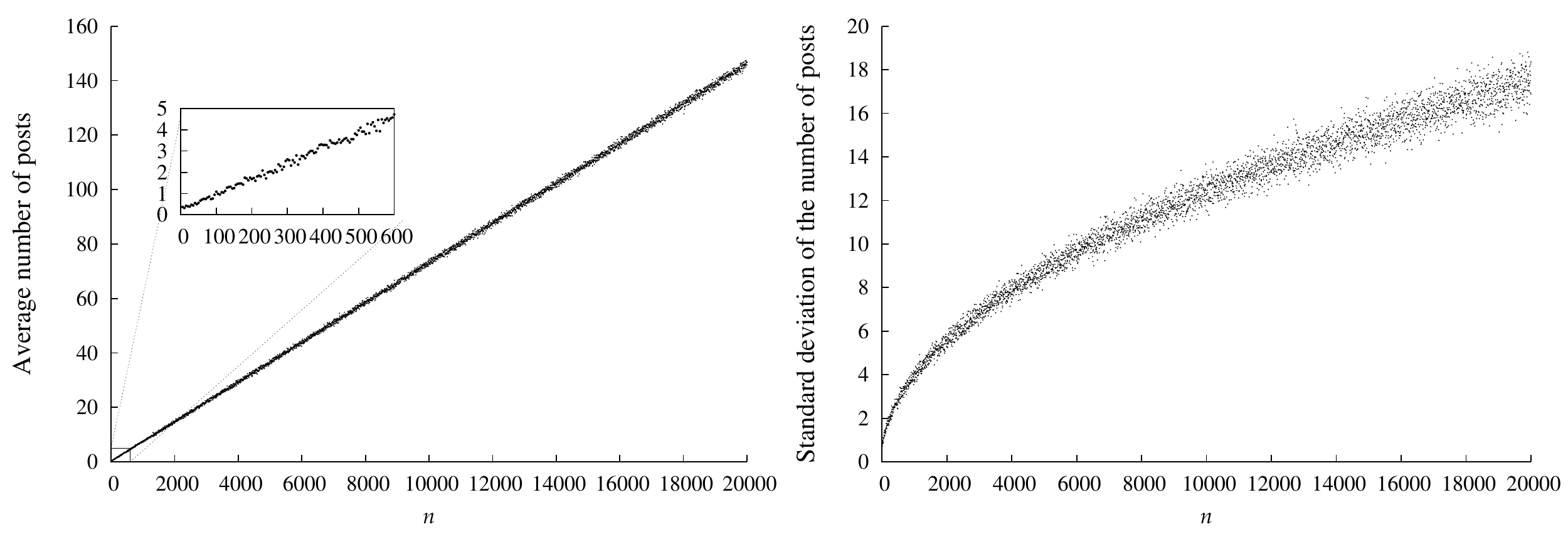}}
\caption{\label{f2} Average and standard deviation of the number of posts
  in a random graph order versus number of elements, for $p = 0.35$. For
  each value of $n$, four hundred $n$-element causal sets were generated, 
  and the 
  average and standard deviation of the numbers of posts are shown. The
  sampled values of $n$ are the multiples of 5 up to 20,000.  Notice in the
  inset graph that the average number of posts seems to be well fit by a
  line with positive $y$-intercept. }
\end{center}
\end{figure}

\begin{theorem}\label{finite_thm} 
  For all $0<q<1$, there exists a sequence of real numbers
  $\{b_n(q)\}_{n=1}^{\infty}$ so that for all $n\geq1$, $b_n(q)$ is
  strictly between 0 and 1 and the expectation value $\langle
  N_{n,q}\rangle$ of the number of posts in a transitively percolated
  causal set on $\{1,\, 2,\, 3,\, \ldots,\, n\}$ with probability $p=1-q$
  satisfies 
  \begin{equation} \label{expect}
    \langle N_{n,q}\rangle = \kappa^2(q)\cdot n+b_n(q)\,.
  \end{equation}
  Moreover, $\{b_n(q)\}_{n=2}^{\infty}$ is strictly monotonically
  decreasing to a positive limit $b(q)$ given by the expression
  \begin{equation} \label{b_lim}
    b(q) = 2\kappa(q)\sum_{k=0}^{\infty}(\lambda_k(q)-\kappa(q)).
  \end{equation}
\end{theorem}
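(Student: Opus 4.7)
The plan is to reduce everything to a direct computation of $\Pr(k \text{ is a post})$ for each $k$, paralleling the argument used for Theorem \ref{infty_thm}. In a random graph order on $\{1,\ldots,n\}$, the element $k$ is a post if and only if it is related to every $m<k$ and to every $m>k$ with $m\leq n$. By the same telescoping-and-independence reasoning that appears in the proof of Theorem \ref{infty_thm}, these two events are independent, with probabilities $\lambda_{k-1}(q)$ and $\lambda_{n-k}(q)$ respectively (with the convention $\lambda_0(q)=1$). Linearity of expectation then gives
\begin{equation*}
\langle N_{n,q}\rangle \;=\; \sum_{k=1}^{n} \lambda_{k-1}(q)\,\lambda_{n-k}(q).
\end{equation*}

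I would next define $b_n(q):=\langle N_{n,q}\rangle-\kappa^2(q)\,n$ and, by substituting $\lambda_i=\kappa+(\lambda_i-\kappa)$ into each factor and exploiting the $i\leftrightarrow n-1-i$ symmetry of the sum, obtain the decomposition
\begin{equation*}
b_n(q) \;=\; 2\kappa(q)\sum_{i=0}^{n-1}\bigl(\lambda_i(q)-\kappa(q)\bigr)
\;+\;\sum_{i=0}^{n-1}\bigl(\lambda_i(q)-\kappa(q)\bigr)\bigl(\lambda_{n-1-i}(q)-\kappa(q)\bigr).
\end{equation*}
Since every partial product $\lambda_i(q)$ strictly exceeds $\kappa(q)$, both terms on the right are strictly positive, which gives $b_n(q)>0$. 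To identify the limit I would invoke the forthcoming lemma that bounds $\lambda_k(q)-\kappa(q)$ by a quantity decaying exponentially in $k$, say $C(q)\,q^{k+1}$. That estimate makes the first sum converge to $2\kappa(q)\sum_{k=0}^{\infty}(\lambda_k(q)-\kappa(q))=b(q)$, and forces the second ``diagonal convolution'' sum to vanish as $n\to\infty$, because each of its $n$ summands is bounded by $C(q)^2 q^n$, so the total is at most $n\,C(q)^2 q^n\to 0$.

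For the strict monotonicity of $\{b_n(q)\}_{n\geq 2}$, I would start from the telescoping identity $\lambda_{n+1-k}(q)=\lambda_{n-k}(q)(1-q^{n+1-k})$ to get
\begin{equation*}
b_{n+1}(q)-b_n(q) \;=\; \lambda_n(q)-\kappa^2(q) - \sum_{k=1}^{n}\lambda_{k-1}(q)\,\lambda_{n-k}(q)\,q^{n+1-k},
\end{equation*}
and then show negativity term-by-term by comparing the magnitude $\lambda_n-\kappa^2\sim \kappa(1-\kappa)+O(q^{n+1})$ on the left with the geometric-type tail on the right, which one expects to be at least $\kappa^2\,q/(1-q)$ to leading order in $n$. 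The upper bound $b_n(q)<1$ then reduces by monotonicity to checking the initial values: $b_1(q)=1-\kappa^2(q)<1$ is immediate, and $b_2(q)=2(1-q)-2\kappa^2(q)<1$ can be verified directly as a one-variable inequality in $q$.

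The main obstacle, I expect, is verifying the strict sign in the monotonicity identity for all $n\geq 2$, since $\lambda_n-\kappa^2$ and the geometric sum are of comparable magnitude and the crude bounds $\lambda_i\in(\kappa,1]$ are not sharp enough. The argument will likely need a quantitative bound of the same flavor as the lemma on $\lambda_k-\kappa$, or a clever re-grouping of the terms in the decomposition so that the negative contribution is made manifest.
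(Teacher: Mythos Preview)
Your bilinear decomposition
\[
b_n \;=\; 2\kappa\sum_{i=0}^{n-1}(\lambda_i-\kappa)\;+\;\sum_{i=0}^{n-1}(\lambda_i-\kappa)(\lambda_{n-1-i}-\kappa)
\]
is correct and, for the positivity of $b_n$ and for the limit formula~(\ref{b_lim}), is in fact cleaner than the route the paper takes. The paper obtains $b_n>0$ from a logarithmic bound on $\mu_k$ and extracts the limit by splitting the sum at $\lfloor n/2\rfloor$ and bounding each half; your version makes both conclusions almost immediate once the lemma bound $\lambda_k-\kappa<q^{k+1}$ is available, since the cross term is at most $n\,q^{n+1}$.

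The genuine gap is the monotonicity step, and your heuristic there is not merely incomplete but points in the wrong direction. In your identity
\[
b_{n+1}-b_n \;=\; (\lambda_n-\kappa^2)\;-\;\sum_{j=1}^{n}\lambda_{n-j}\,\lambda_{j-1}\,q^{j},
\]
the two pieces have \emph{exactly the same} leading behaviour as $n\to\infty$: the first tends to $\kappa(1-\kappa)$, while the second tends to $\kappa\sum_{j\ge 1}\lambda_{j-1}q^{j}=\kappa\sum_{k\ge 0}(\lambda_k-\lambda_{k+1})=\kappa(1-\kappa)$ by telescoping. Your suggested lower bound $\kappa^2 q/(1-q)$ for the sum (from $\lambda_{k-1}\lambda_{n-k}\ge\kappa^2$) would require $\kappa(1-\kappa)<\kappa^2 q/(1-q)$, i.e.\ $\kappa>1-q$, which is always false; and more generally no comparison of leading orders can ever settle the sign, because they cancel. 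One must control the $O(q^n)$ corrections on both sides and show they conspire correctly. The paper does this by passing to the variables $\mu_k=\kappa/\lambda_{k-1}$, applying summation by parts to $b_n-b_{n+1}$, and reducing the resulting expression to the inequality $\lambda_{n-1}-\kappa<q^n$ of Lemma~\ref{finite_lemma}; it is this algebraic reorganisation, not a term-by-term comparison, that carries the argument.
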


For notational convenience, we will drop the explicit dependence of
$\kappa^2$ and $b_n$ on $q$.  We also introduce the abbreviations
\[\mu_{n}:= \prod_{i=n}^{\infty}(1-q^i),\qquad
S_n:= \sum_{k=1}^{\infty}\frac{q^{nk}}{\lambda_k}.\]
Notice that $\mu_1=\kappa$.
In order to prove the theorem, we first establish the following estimates.

\begin{lemma} \label{finite_lemma} For all $n\geq2$, we have 
\begin{equation}\label{lemmaclaim}
(\lambda_{n-1}-q^n) S_n < q^n, 
\end{equation}
and
\begin{equation}\label{mainlemmaclaim}
\lambda_{n-1}-\kappa < q^n.
\end{equation}
\end{lemma}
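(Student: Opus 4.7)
My plan is to reduce both inequalities to the single bound (\ref{mainlemmaclaim}) via an identity that expresses $S_n$ in terms of $\lambda_{n-1}$ and $\kappa$, and then prove (\ref{mainlemmaclaim}) directly with a short telescoping argument.

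The first step is to establish the identity
\[ 1+S_n \;=\; \frac{1}{\mu_n} \;=\; \frac{\lambda_{n-1}}{\kappa}, \qquad\text{equivalently}\qquad \kappa\,S_n \;=\; \lambda_{n-1}-\kappa. \]
This is Euler's classical $q$-series identity $\prod_{i=0}^{\infty}(1-xq^i)^{-1}=\sum_{k=0}^{\infty}x^k/\lambda_k$ specialized to $x=q^n$: the left-hand side becomes $\prod_{i=n}^\infty(1-q^i)^{-1}=1/\mu_n$, which equals $\lambda_{n-1}/\kappa$ because $\lambda_{n-1}\mu_n=\kappa$, while the right-hand side is $1+S_n$.

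Given this identity, the two inequalities of the lemma are equivalent. Rearranging (\ref{lemmaclaim}) gives $\lambda_{n-1}\,S_n<q^n(1+S_n)$; substituting $1+S_n=\lambda_{n-1}/\kappa$ and canceling the positive factor $\lambda_{n-1}$ reduces it to $\kappa S_n<q^n$, which is exactly (\ref{mainlemmaclaim}). So it suffices to prove (\ref{mainlemmaclaim}).

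For (\ref{mainlemmaclaim}), I would write $\lambda_{n-1}-\kappa=\lambda_{n-1}(1-\mu_n)$ and bound each factor. Setting $g(n):=q^n/(1-q)+\mu_n-1$ and using the recursion $\mu_n=(1-q^n)\mu_{n+1}$, a one-line computation gives $g(n)-g(n+1)=q^n(1-\mu_{n+1})>0$, so $g$ is strictly decreasing to~$0$, whence $g(n)>0$, i.e., $1-\mu_n<q^n/(1-q)$. Combined with the trivial bound $\lambda_{n-1}=(1-q)\prod_{i=2}^{n-1}(1-q^i)\le 1-q$, this yields $\lambda_{n-1}-\kappa<q^n$. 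I expect the identity in the first step to be the only part requiring real justification; everything after it is an elementary rearrangement or one-line calculation.
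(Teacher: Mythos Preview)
Your argument is correct. Both proofs hinge on the Euler identity $1+S_n=1/\mu_n=\lambda_{n-1}/\kappa$, but you and the paper use it in opposite directions. The paper first proves (\ref{lemmaclaim}) by a series-side computation---replacing $\lambda_{n-1}$ by $\lambda_1$ and then telescoping the expanded sum $(\lambda_1-q^n)S_n$ to $q^n$ plus a tail of negative terms---and only afterward invokes the identity to deduce (\ref{mainlemmaclaim}). You instead observe that the identity makes the two inequalities \emph{equivalent}, and then attack (\ref{mainlemmaclaim}) directly on the product side: the telescoping lemma $g(n)-g(n+1)=q^n(1-\mu_{n+1})>0$ for $g(n)=q^n/(1-q)+\mu_n-1$ gives $1-\mu_n<q^n/(1-q)$ without any reference to $S_n$, and the trivial bound $\lambda_{n-1}\le 1-q$ finishes it. Your route is shorter and isolates the content of (\ref{mainlemmaclaim}) as a statement purely about the infinite product $\mu_n$; the paper's route has the minor advantage of including a self-contained proof of the Euler identity, which you simply cite. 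One small remark: at $n=2$ you have $\lambda_{n-1}=1-q$ exactly, so the strict inequality in (\ref{mainlemmaclaim}) must come from the strict inequality $g(n)>0$---which it does, since $g$ is strictly decreasing to~$0$.
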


\begin{proof} If $\lambda_{n-1}-q^n$ is not positive, then
  (\ref{lemmaclaim}) clearly holds since $S_n$ and the right-hand side are
  positive.  So suppose that $\lambda_{n-1}-q^n$ is positive.  Because the
  $\lambda_k$ are monotonically decreasing in $k$ and $n-1\geq1$, we may
  replace $\lambda_{n-1}$ with $\lambda_1$ to find
\begin{align*}
(\lambda_{n-1}-q^n)S_n&
  ~\leq~(\lambda_{1}-q^n)\left(\sum_{k=1}^{\infty}\frac{q^{nk}}{\lambda_k}\right).
\end{align*}
Distributing and using the definitions of $\lambda_k$ gives
\begin{align*}
(\lambda_{1}-q^n)\left(\sum_{k=1}^{\infty}\frac{q^{nk}}{\lambda_k}\right)&=
\sum_{k=1}^{\infty}\frac{q^{nk}}{\prod_{i=2}^{k}(1-q^i)}-\sum_{k=1}^{\infty}
\frac{q^{n(k+1)}}{\prod_{i=1}^{k}(1-q^i)} \\
&=\sum_{k=1}^{\infty}\frac{q^{nk}}{\prod_{i=2}^{k}(1-q^i)}-\sum_{k=2}^{\infty}
\frac{q^{nk}}{\prod_{i=1}^{k-1}(1-q^i)},
\end{align*}
where we have reindexed the second sum in the second line. Separating the
first term in the first sum, we get
\begin{align*}
\hphantom{(\lambda_{1}-q^n)\left(\sum_{k=1}^{\infty}\frac{q^{nk}}
{\lambda_k}\right)}
&=q^n+\sum_{k=2}^{\infty}\frac{q^{nk}}{\prod_{i=2}^{k}(1-q^i)}-
\sum_{k=2}^{\infty}\frac{q^{nk}}{\prod_{i=1}^{k-1}(1-q^i)} \\
&=q^n+\sum_{k=2}^{\infty}\frac{q^{nk}}{\prod_{i=2}^{k-1}(1-q^i)}
\left[\frac{1}{1-q^k}-\frac{1}{1-q}\right] \\
&< q^n .
\end{align*}
because all the terms in the summation are negative.  This establishes
(\ref{lemmaclaim}).

\begin{figure}
\begin{center}
\ifthenelse{\equal{\coloroption}{color}}
{\includegraphics[angle=0,scale=0.8]{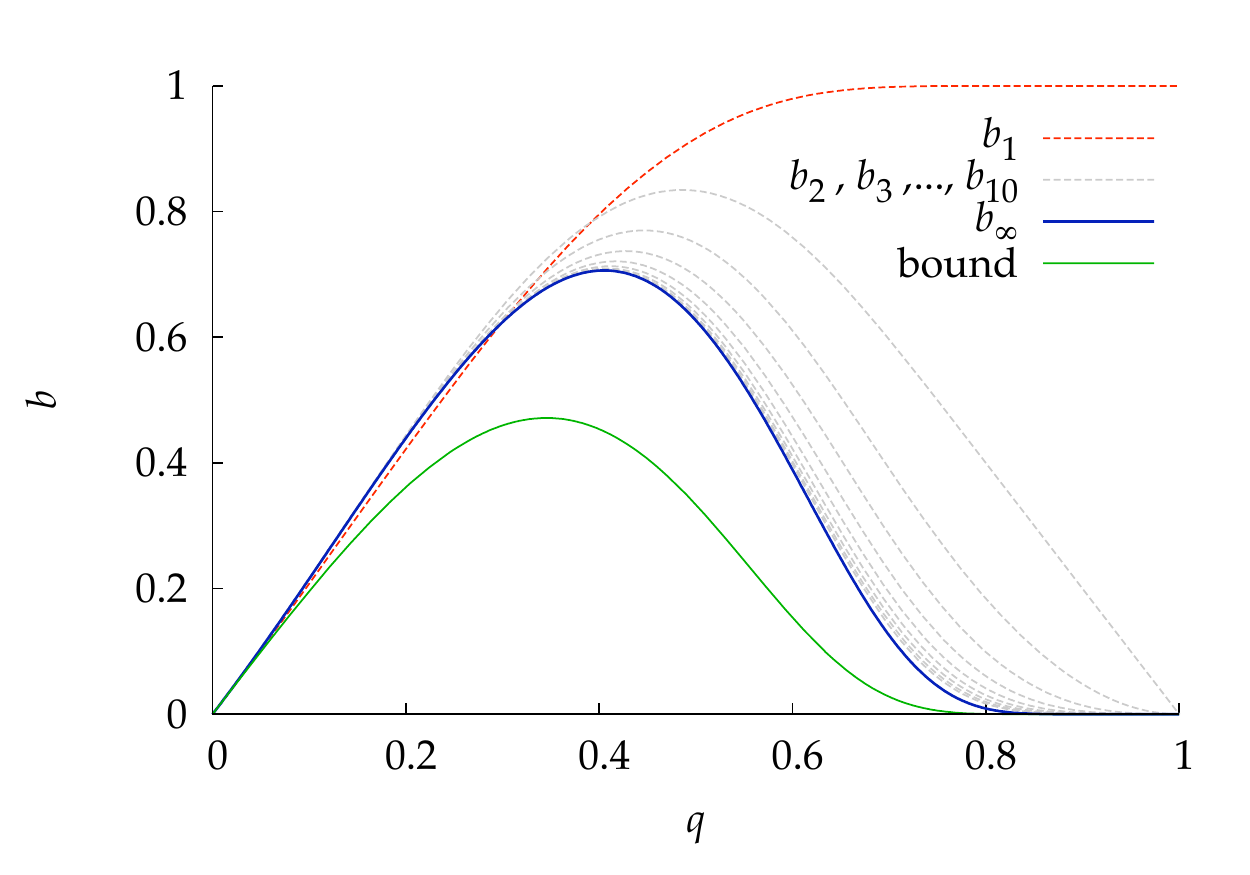}
\caption{\label{bvsq} 
  The offset terms $b_n(q)=\langle N_{n,1-q}\rangle-\kappa^2(q)n$, for
  $n=1$ (dashed red), $n=2,3,\ldots,10$ (dashed gray), and $n\to\infty$
  (solid blue).  The bound for $b(q)$ obtained by taking $n\to\infty$ in
  (\ref{bound}) is shown in green.}  }
{\includegraphics[angle=0,scale=0.8]{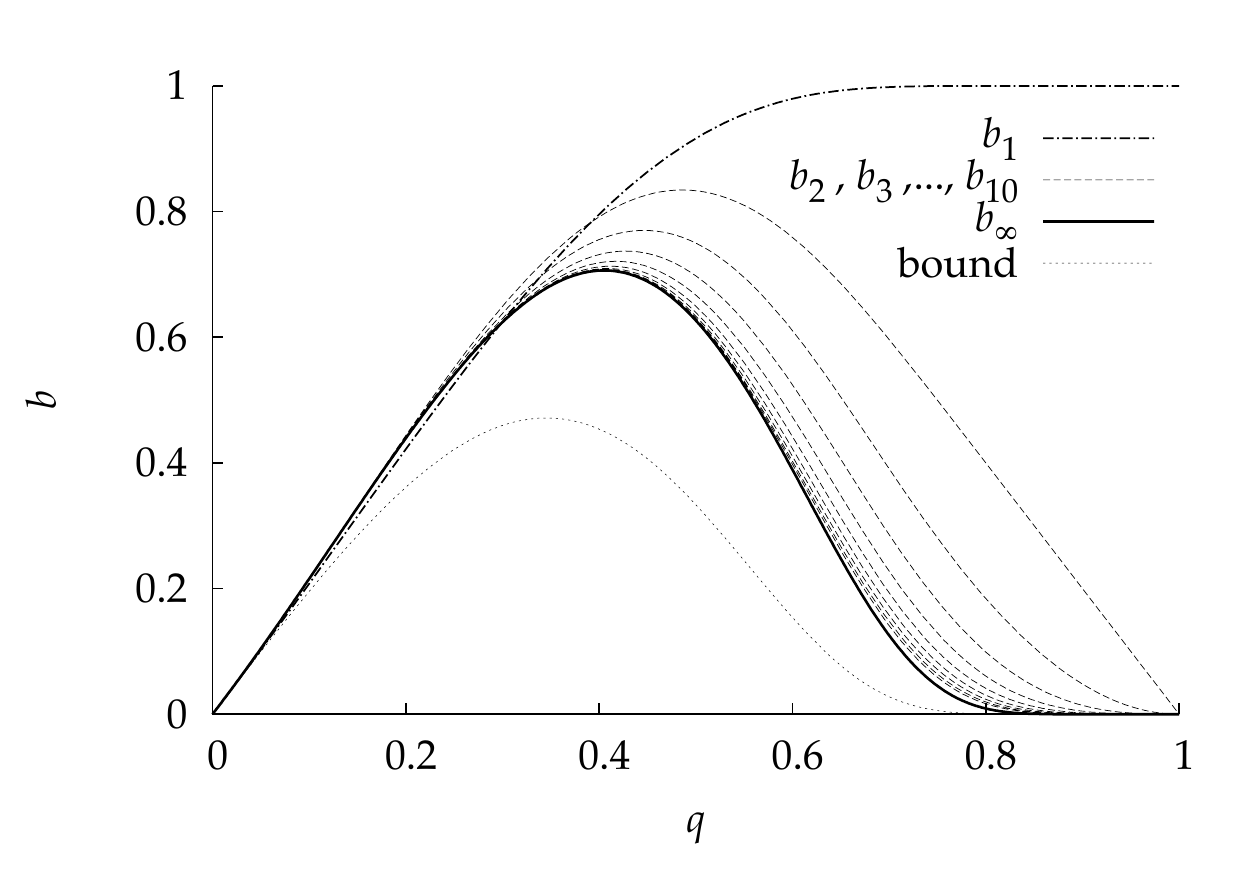}
\caption{\label{bvsq} 
  The offset terms $b_n(q)=\langle N_{n,1-q}\rangle-\kappa^2(q)n$, for
  $n=1,2,3,\ldots,10$, and $\infty$.  The bound for $b(q)$ obtained by
  taking $n\to\infty$ in (\ref{bound}) is also shown.
}
}
\end{center}
\end{figure}

Now we will show that (\ref{mainlemmaclaim}) follows from
(\ref{lemmaclaim}).  We use the following well-known identity
\cite{rademacher}, which holds for all complex $|x|<1$, $|z|<1$:
\begin{equation}\label{rade}
\prod_{m=1}^{\infty}(1-x^mz) =
   \left(\sum_{k=0}^{\infty}\frac{x^kz^k}{\prod_{i=1}^{k}(1-x^i)}\right)^{-1}. 
\end{equation}
For completeness, we include a proof of (\ref{rade}). Define
\begin{equation} \label{defoff}
f(x,z)=\frac{1}{\prod_{m=1}^{\infty}(1-x^mz)},\end{equation} 
and consider $f$ as a function of $z$ for fixed $|x|<1$.  
Since $\prod_{m=1}^{\infty}(1-x^mz)\neq0,$ we can write it as 
$\exp\left(\sum_{m=1}^{\infty}\log(1-x^mz)\right)$. This, in turn, may be
written  
\[\exp\left(-\sum_{m=1}^{\infty}\sum_{k=1}^{\infty}\frac{(x^mz)^k}{k}\right)
=\exp\left(-\sum_{k=1}^{\infty}\sum_{m=1}^{\infty}\frac{x^{mk}z^k}{k}\right).\]
This shows that $f$ is analytic throughout the unit disk $|z|<1$.
Therefore we can write $f$ as a power series in $z$:
$f(x,z)=\sum_{k=0}^{\infty}c_k(x)z^k$.  Now it is easy to see from
(\ref{defoff}) that $f(x,xz)=(1-xz)\,f(x,z)$, and this implies
$x^k\,c_k(x)=c_k(x)-x\,c_{k-1}(x)$.  Noticing that $c_0(x)=1$ and solving
this recursive relationship gives (\ref{rade}).

Finally, set $x=q$ and $z=q^{n-1}$, and apply (\ref{rade}) to the
definition of $\mu_n$ to obtain:
\[
\mu_n=\left(\sum_{k=0}^{\infty}\frac{q^{nk}}{\lambda_k}\right)^{-1}.
\]
Notice that $\mu_n=(1+S_n)^{-1}$, and rearrange (\ref{lemmaclaim}) to get
$\lambda_{n-1}\,S_n<(1+S_n)\,q^n$.  Putting these together, we have
\begin{equation*} 
\lambda_{n-1}-\kappa=\lambda_{n-1}(1-\mu_n) = 
\frac{\lambda_{n-1}\,S_n}{1+S_n} < q^n,
\end{equation*} 
as desired.
\end{proof}

Lemma 2 provides a very useful bound regarding the convergence of the
partial products of $\kappa$ to their limiting value.  We will use the
bound several times to prove statements that involve expressions of the
form $\lambda_{n-1}-\kappa$ or $1-\mu_n$.  While there are more efficient
ways to compute the Euler function\cite{sokal}, Lemma 2 also shows that
even the naive products converge reasonably well: the error is strictly
bounded by $q^n$, a considerable improvement---especially for $q$ close to
1---over the obvious estimate that the partial products are of order
$O(q^n)$.  Slater\cite{slater} first used (\ref{rade}) to compute the
Euler function numerically but did not observe its implications for the
naive products.

\begin{proof}[Proof of theorem.] First, we define the random variables 
\[
X_k=\left\{\begin{array}{ll} 1 & \text{$k$ is a post} \\ 0 & \text{$k$ is
      not a post} \end{array}\right. \qquad (k=1,2,\ldots,n).
\]
Then $N_{n,q} = \sum_{k=1}^n X_k$ and, by linearity, $\ev{N_{n,q}}
=\sum_{k=1}^n \ev{X_k} = \sum_{k=1}^{n}\text{Pr($k$ is a post)}$.  From the
proof of Theorem \ref{infty_thm}, we know
\[
\text{Pr($k$ is a post)} = \prod_{i=1}^{k-1}(1-q^i) \prod_{j=1}^{n-k}(1-q^j).
\] 
Substituting in the definitions of $\mu_k$ and $\kappa$ gives:
\begin{align*}
  \ev{N_{n,q}} &= \sum_{k=1}^{n}\frac{\kappa}{\mu_{k}}\,\frac{\kappa}{\mu_{n-k+1}}
  = \kappa^2 \sum_{k=1}^{n}\frac{1}{\mu_{k}\,\mu_{n-k+1}}.
\end{align*}
Define the ``offset" quantities $b_n$ according to (\ref{expect}):
\begin{equation}  \label{offset}
b_n =  \ev{N_{n,q}} - n\, \kappa^2
    = \kappa^2\sum_{k=1}^{n}\left[\frac{1}{\mu_{k}\,\mu_{n-k+1}}-1\right].
\end{equation}
First, we produce a lower bound on $b_n$.  As $x < -\!\log\left(1-x\right)$
for $x\in(0,1)$, we have 
\[
-\log\mu_{k} 
= -\sum_{i=k}^{\infty} \log(1-q^i)
> \sum_{i=k}^{\infty}q^i
= \frac{q^k}{1-q},
\]
which implies that 
\[\mu_{k} < \exp\left(-\frac{q^k}{1-q}\right).\]  From this we obtain
\[ \frac{1}{\mu_{k}\,\mu_{n-k+1}}-1 ~>~
\exp\left(\frac{q^k+q^{n-k+1}}{1-q}\right)-1 ~>~
 \frac{q^k+q^{n-k+1}}{1-q}.
\]
Substituting the previous equation into (\ref{offset}) gives
\begin{equation}  \label{bound}
b_n ~>~ \kappa^2\sum_{k=1}^{n} \frac{q^k+q^{n-k+1}}{1-q}
    ~=~ \frac{2\,q\,\kappa^2(1-q^n)}{(1-q)^2}, 
\end{equation}
where we have used the fact that the exponents $n-k+1$ and $k$ range over
the same set of values as $k=1,2,\ldots,n$.  This establishes that the
sequence $\{b_n\}_{n=2}^{\infty}$ is bounded below by a positive quantity
(see Fig. \ref{bvsq}).

Next, we prove that for every $n\geq2$ we have $b_n> b_{n+1}$. We
calculate the difference $b_n-b_{n+1}$:
\begin{align*}
\kappa^{-2}(b_n-b_{n+1})&=\sum_{k=1}^{n}\frac{1}{\mu_{k}\mu_{n-k+1}}-n-
\sum_{k=1}^{n+1}\frac{1}{\mu_{k}\mu_{n-k+2}}+n+1 \\
&=1-\frac{1}{\mu_1\mu_{n+1}}+\sum_{k=1}^{n}\frac{1}{\mu_k}
\left(\frac{1}{\mu_{n-k+1}}-\frac{1}{\mu_{n-k+2}}\right).
\end{align*}
Recall that summation by parts (see, e.g., Ref \cite{rudin}) says that for
general sequences $\{x_n\}$ and $\{y_n\}$, if we define
$X_n=\sum_{k=1}^{n}x_k$, we 
have $\sum_{k=1}^{n}x_ky_k=X_ny_n+\sum_{k=1}^{n-1}X_k(y_k-y_{k+1})$.  Taking
$x_k=1/{\mu_{n-k+1}}-1/{\mu_{n-k+2}}$ and $y_k=1/\mu_k$ in this formula, we
get (notice that $X_k$ is a telescoping sum):
\begin{align*}
&=1-\frac{1}{\mu_1\mu_{n+1}}+
\frac{1}{\mu_n}\left(\frac{1}{\mu_1}-\frac{1}{\mu_{n+1}}\right)+
\sum_{k=1}^{n-1}\left[\left(\frac{1}{\mu_{n-k+1}}-\frac{1}{\mu_{n+1}}\right)
\left(\frac{1}{\mu_{k}}-\frac{1}{\mu_{k+1}}\right)\right].
\end{align*}
The quantity $1/{\mu_{n-k+1}}-1/{\mu_{n+1}}$ in the first set of
parentheses in the sum can be made smaller by replacing $\mu_{n-k+1}^{-1}$
with $\mu_n^{-1}$ as $\mu_n$ is monotonically increasing in $n$.
Performing the remaining telescoping sum yields
\begin{align*}
&\geq 1-\frac{1}{\mu_1\mu_{n+1}}+\frac{1}{\mu_n}\left(\frac{1}{\mu_1}-
\frac{1}{\mu_{n+1}}\right)+\left(\frac{1}{\mu_{n}}-\frac{1}{\mu_{n+1}}\right)
\left(\frac{1}{\mu_{1}}-\frac{1}{\mu_{n}}\right)  \\
&= 1+\frac{2}{\mu_1}\left(\frac{1}{\mu_n}-\frac{1}{\mu_{n+1}}\right)-\frac{1}
{\mu_n^2}.
\end{align*}
Using the fact that $\kappa=\mu_1=\lambda_{n-1}\mu_n$ and expressing all
the denominators in terms of $\kappa$ gives
\begin{align*}
&=\frac{1}{\kappa^2}\left[\kappa^2-\lambda_{n-1}^2+2\lambda_{n-1}-
2\lambda_n\right].
\end{align*}
Factoring out a factor of $\lambda_{n-1}$ from each variable in the
numerator leads to
\begin{align*}
&=\frac{\lambda_{n-1}}{\kappa^2}
  \left[\lambda_{n-1}(\mu_n-1)(\mu_n+1)+2q^n\right]. 
\end{align*}
Now, because $\mu_n-1$ is negative, we can replace $\mu_n+1$ with 2 to make
the first term in brackets more negative.  Then we have
\begin{align} \nonumber
\kappa^{-2}(b_n-b_{n+1})&>\frac{2\lambda_{n-1}}{\kappa^2}
\left(\lambda_{n-1}(\mu_n-1)+q^n\right) \\ \label{monotone}
&=\frac{2\lambda_{n-1}}{\kappa^2}(\kappa - \lambda_{n-1}+q^n),
\end{align}
which is positive by the preceding lemma. This establishes that
$\{b_n\}_{n=2}^{\infty}$ is monotonically decreasing and hence must
converge to a (positive) limit $b$.

Now we will show that $ b_n < 1$ for all $n$.  From (\ref{offset}), we have
$0< b_1 < 1$ since $ b_1 = 1-\kappa^2$.  Also, $b_2=2(1-q-\kappa^2),$ so
$b_2<1$ whenever $q+\kappa^2>1/2$. We use the inequality $\kappa>1-q-q^2$,
which holds for all $0<q<1$ by (\ref{mainlemmaclaim}).  We get that
$q+\kappa^2>q+(1-q-q^2)^2$.  To verify that the right-hand side is greater
than $1/2$ for all $q$, notice that its derivative factors as
$-(1-2q)(1+4q+2q^2)$, which implies that it achieves its minimum at
$q=1/2$. At $q=1/2$, it is equal to $9/16$, which proves that
$b_2<1$. Since $\{b_n\}$ is monotonically decreasing for $n\geq 2$, this
establishes $b_n < 1 \:\forall\: n$.

Finally, we will prove (\ref{b_lim}). Define $B$ to be the right-hand side
of (\ref{b_lim})---with $B_k$ its $k$th partial sum---so that our goal is to
show $b=B$. Begin by writing $H=\lfloor n/2 \rfloor$ and $\delta_{\rm odd}
(n)=1$ if $n$ is odd and $0$ if $n$ is even. By splitting the
symmetric sum in (\ref{offset}), we have
  \begin{align}\nonumber 
    b_n &= 2\sum_{k=1}^{H}\left(\lambda_{k-1}\lambda_{n-k}-\kappa^2\right)+
    \delta_{\rm odd}(n)(\lambda_H^2-\kappa^2) \\ \nonumber &=
    2\kappa\sum_{k=1}^{H}\left(\lambda_{k-1}\mu_{n-k+1}^{-1}-\kappa\right)+
    \delta_{\rm odd}(n)(\lambda_H^2-\kappa^2). 
  \end{align}
  Now since $\mu_{n-k+1}^{-1}>1$, we can replace it with 1 and take the
  limit as $n\to\infty$ (so that the $\delta_{\rm
    odd}(n)(\lambda_H^2-\kappa^2)$ term goes to 0) to get that
  $b\geq B$.  Notice that $B$ is a positive series that is bounded above, and
  hence it must converge.  Now we look
  at the difference between the partial sums $b_n$ and $B_{H-1}$:
  \begin{align}\nonumber 
    b_n-B_{H-1} &=  2\kappa\sum_{k=1}^{H}\lambda_{k-1}\left(
        \mu_{n-k+1}^{-1}-1\right)+
      \delta_{\rm odd}(n)(\lambda_H^2-\kappa^2) \\ \nonumber
      &<  2\kappa\sum_{k=1}^{H}\left(
        \mu_{n-k+1}^{-1}-1\right)+
      \delta_{\rm odd}(n)(\lambda_H^2-\kappa^2)\\ \nonumber
      &<  2\kappa H\left(\mu_{H+1}^{-1}-1\right)+
      \delta_{\rm odd}(n)(\lambda_H^2-\kappa^2);
  \end{align}
in the last step we used to the facts that $n - \lfloor n/2 \rfloor \geq
\lfloor n/2 \rfloor$ and that $\mu_k$ is monotonically increasing in $k$.
Multiplying and dividing the first term by $\lambda_H$ and using the fact that
$\lambda_{H}(1-\mu_{H+1})<q^{H+1}$ by Lemma \ref{finite_lemma} yields:
  \begin{align}\nonumber 
   b_n-B_{H-1} &<  2\kappa H \frac{q^{H+1}}{\lambda_{H}\mu_{H+1}}+
      \delta_{\rm odd}(n)(\lambda_H^2-\kappa^2) \\ \nonumber
      &=  2 H q^{H+1}+
      \delta_{\rm odd}(n)(\lambda_H^2-\kappa^2).
  \end{align}    
  Taking the limit as $n\to\infty$ of both sides gives that $b\leq
  B$, which completes the proof.
\end{proof}

\section{Conclusions}

In this paper we considered random graph orders on $n$ elements.  Consistent
with the known fact that infinite random graph orders have infinitely many
posts \cite{Alon}, we have shown in Theorem 2 that the mean value of the number
$N_{n,q}$ of posts grows linearly in $n$ with a mean separation between
posts approaching
\[
\kappa^{-2}(q) = \bigg[\prod_{k=1}^\infty (1-q^k) \bigg]^{-2}\!\!\!\!\!\!,
\]
the same mean spacing between posts as in a random graph order over
$\mathbb{Z}$. 

In a finite random graph order over $\{1,2,\ldots,n\}$, however, the actual
value of $\langle N_{n,q}\rangle$ is not exactly proportional to $n$ but
includes a small positive offset $b_n(q)$.  This offset stems from the fact
that the first and the last few elements are appreciably more likely than
the other ones to be posts.  Thus, the functions $b_n(q)$ (shown in
Fig.\ \ref{bvsq}) quantify the edge effects, with $\frac12\,b_n(q)$
corresponding to the contribution of each of the two ends of the poset.  As
Theorem 2 and Fig.\ \ref{bvsq} show, that contribution does not vanish even
in the $n\to\infty$ limit.  Although we do not have an analytical
expression or bound for the standard deviation of $N_{n,q}$ at this point,
our numerical results suggest that---consistent with intuition---it is
proportional to $\sqrt{n}$ (see Fig.\ \ref{f2}). 

Overall, this result does not significantly affect the number of posts or
the size of the inter-post region in a random graph order.  As mentioned in
the Introduction, the latter is one of the first quantities one considers
when estimating how viable such posets are as discrete models for
spacetime.  Therefore, as far as allowing inter-post regions of a random
graph order to grow large, the only condition we need to impose is that $q$
be sufficiently close to 1; equivalently, the probability $p=1-q$ of
linking two elements in the random graph must be small enough. The size of
the inter-post regions is not the only condition we would impose for a
poset to be manifoldlike; we are also studying the effects of other
requirements and will report our results on them separately. One
interesting byproduct of the work reported here, however, is the bound
(\ref{mainlemmaclaim}) in Lemma 2. This bound was useful for us in proving
the assertions in Theorem \ref{finite_thm}, but, because of the importance
of the Euler function, it may be useful in other contexts as well.

\section*{Acknowledgments}
The authors would like to thank David Rideout and Graham Brightwell for 
helpful suggestions.

\bibliography{post_asymptotics}
\bibliographystyle{cpc}

\end{document}